\documentclass{amsart}
\usepackage{amssymb, latexsym, euscript}
\usepackage{graphicx}

\DeclareMathOperator{\ins}{int}

\usepackage{color}


      \def\dC{{\mathbb C}}

      \def\dR{{\mathbb R}}

\def\cD{{\mathcal D}}

\def\bm\chi{\mbox{\boldmath$\chi$}}

\def\RE{{\rm Re\,}}
\def\IM{{\rm Im\,}}

\def\eps{{\varepsilon}}

\def\e{{\rm e}}

\def\supp{{\rm supp\,}}

\def\cmr{{\dC \setminus \dR}}
\def\set#1{\left\{#1\right\}}
\def\sbs{\subseteq}
\newcommand{\ii}{\textrm{i}}

\newcommand{\al}{\alpha}
\newcommand{\be}{\beta}

\renewcommand{\Re}{\hbox{\rm Re }}
\renewcommand{\Im}{\hbox{\rm Im }}

\newtheorem{theorem}{Theorem}[section]
\newtheorem{proposition}[theorem]{Proposition}

\theoremstyle{definition}
\newtheorem{example}[theorem]{Example}

\numberwithin{equation}{section}

\begin{document}

\title[Global and local behavior of zeros  of nonpositive type]
{Global and local behavior of zeros  of nonpositive type}

\author[H.S.V.~de~Snoo]{Henk de Snoo}
\address{Johann Bernoulli Institute for  Mathematics and Computer Science\\
University of Groningen \\
P.O. Box 407, 9700 AK Groningen \\
Nederland} \email{desnoo@math.rug.nl}

\author[H.~Winkler]{Henrik Winkler}
\address{
Institut f\"{u}r Mathematik\\
Technische Universit\"at Ilmenau \\
Curiebau, Weimarer Str.~25, 98693 Ilmenau \\
Germany} \email{henrik.winkler@tu-ilmenau.de}

\author[M.~Wojtylak]{Micha\l{} Wojtylak}
 \address{
Institute of Mathematics\\
Jagiellonian University\\
 \L ojasiewicza 6\\
 30-348 Krak\'ow\\
Poland} \email{michal.wojtylak@gmail.com}

\thanks{The work of Micha\l{} Wojtylak was supported by the Alexander von Humboldt Foundation}

\translator{}

\keywords{Generalized Nevanlinna function, generalized zero of nonpositive type, generalized pole of nonpositive type}

\subjclass[2000]{Primary 47A05, 47A06}
\begin{abstract}
A generalized Nevanlinna function $Q(z)$ with one negative square has precisely
one generalized zero of nonpositive type in the closed extended upper halfplane.
The fractional linear transformation defined by $Q_\tau(z)=(Q(z)-\tau)/(1+\tau Q(z))$,
$\tau \in \dR \cup \{\infty\}$, is a generalized Nevanlinna function with one negative square.
Its generalized zero of nonpositive type $\alpha(\tau)$ as a function of $\tau$ is being studied. In particular, it is shown that it is continuous and its behavior in the points where the function extends through the real line is investigated.
\end{abstract}

\date{ \today; Filename: \jobname. }

\maketitle

\section{Introduction}

Let $M(z)$ be an ordinary Nevanlinna function, i.e.,
a function which is holomorphic in $\dC^{+}$ and
which maps the upper half--plane into itself.
It is well known that $M(z)$  admits a representation
\begin{equation}\label{neva}
M(z)=a+bz+\int_{\dR} \left(\frac 1{t-z}-\frac{t}{t^2+1}\right) d\sigma(t),
\quad z \in \cmr,
\end{equation}
with $a\in\dR$, $b>0$, and a measure $\sigma$ satisfying
$\int_{\dR} d\sigma(t)/(t^{2}+1)<\infty$; cf. \cite{donoghue}.
In the lower half--plane $\dC^{-}$ the function $M(z)$ is defined by the symmetry
principle $\overline{M(z)}=M(\bar z)$.
Then $M(z)$ is holomorphic on $\dC^+\cup\dC^-\cup(\dR\setminus\supp\sigma)$.
Note that if $\dR\setminus\supp\sigma$ contains some interval $I$, then the extension of $M(z)$ given
on $\dC^{+}$ to the set $\dC^+\cup\dC^-\cup I$ is given by the Schwarz reflection principle.
However, the main interest in this paper is in the situation when $M(z)$ extends
holomorphically across the real line, but the extension does \textit{not necessarily}
satisfy the symmetry principle $\overline{M(z)}=M(\bar z)$. The simplest example
is given by
 $$
 M(z)= \ii =\frac{1}{\pi}\int_{\dR} \left( \frac 1{t-z}-\frac{t}{t^2+1} \right) dt, \quad z \in \cmr.
 $$
 It appears that such an extension is possible if and only if the measure
 $\sigma$ in \eqref{neva} is absolutely continuous and its derivative extends to a
 holomorphic function; see Theorem \ref{hExt} for details.

Let $Q(z)$ be a generalized Nevanlinna function of class
$\mathbf{N}_1$, i.e., a meromorphic function in the upper
half--plane such that the kernel
 $$
 \mathsf{N}_{Q}(z,w)=\frac{Q(z)-\overline{Q(w)}}{z-\bar w}, \quad z,w \in \dC^{+},
 $$
has precisely one negative square. It has been shown
 that $Q(z)$ has a unique factorization
\begin{equation}\label{nev}
Q(z)=R(z)\,M(z),
\end{equation}
with $R(z)$ of one of the following three forms
\begin{equation}\label{nev1}
\frac{(z-\alpha)(z-\bar \alpha)}{(z-\beta)(z-\bar\beta)},
\quad (z-\alpha)(z-\bar \alpha), \quad \frac1{(z-\beta)(z-\bar\beta)},
\end{equation}
with $\al,\be\in\dC^+\cup\dR$; cf. \cite{DHS1,DLLSh}. The point
$\alpha$ is called the \textit{generalized zero of nonpositive type}
(GZNT) of $Q(z)$
and the point $\beta$ is called the \textit{ generalized pole of nonpositive type}
(GPNT) of $Q(z)$ ;
see e.g. \cite{DHS1,DLLSh,L} for a characterization of GZNT and GPNT
in terms of nontangential limits.

Each $\mathbf{N}_1$  function $Q(z)$ has an operator representation
with a selfadjoint operator $H$ in a Pontryagin space with one negative square, see \eqref{QHrel}. One of the reasons to consider
not necessarily symmetric extendable Nevanlinna functions $M(z)$   is
that then the function $Q(z)$ defined by \eqref{nev}   lead to operator models for  each of the cases of the
following classification of eigenvalues of nonpositive type of $H$
(see \cite{DHS3,JL85,KL73,KL77,langerspec}):
\begin{itemize}
\item[(A)] $\al$ is an eigenvalue of negative type of $H$ with algebraic multiplicity 1;
\item[(B)] $\al$ is a singular critical eigenvalue of $H$ with algebraic multiplicity 1;
\item[(C)] $\al$ is a regular critical eigenvalue of $H$ with algebraic multiplicity 2;
\item[(D)] $\al$ is a singular critical eigenvalue of $H$ with algebraic multiplicity 2;
\item[(E)] $\al$ is a regular critical eigenvalue of $H$ with algebraic multiplicity 3,
\end{itemize}
as will be shown in Section \ref{5}.

A function $Q(z)$ in $\mathbf{N}_1$ generates of family of functions $Q_\tau(z)$ via the  linear fractional transformation
$$
Q_\tau(z):=\frac{Q(z)-\tau}{1+\tau Q(z)},\quad \tau\in\dR,
$$
and by
$$
Q_\infty(z):=-\frac{1}{Q(z)}, \quad \tau=\infty.
$$
It is known that $Q_\tau(z)\in\mathbf{N}_1$, which allows to define for $\tau\in\dR\cup\{\infty\}$ the numbers $\alpha(\tau)$ and $\beta(\tau)$ as, respectively,
GZNT and GPNT of the function $Q_{\tau}(z)$.
The local properties of  $\al(\tau)$ in the case when $\al(\tau_0)$
lies in a spectral gap of $M(z)$ were investigated in detail in \cite{SWW}.
In the present work these results are generalized to the case when $Q(z)$ extends holomorphically to the lower half--plane around
$\al(\tau_0)$.
The paper also contains some results of a global nature
concerning the function $\tau \to \al(\tau)$. In particular, in Theorem \ref{kurve} it is
shown that $\al(\tau)$ forms a curve on the Riemann sphere which is homeomorphic to a circle.
This problem was still open in \cite{SWW} and is now solved by means of recent
results concerning the
convergence behavior of generalized Nevanlinna functions \cite{LaLuMa}. The problem is related to the convergence of poles in Pad\'e
approximation, see \cite{DeDe,Ra}.
The authors are indebted to 
Maxim Derevyagin for assistance in this matter. A related problem of tracking the eigenvalue of nonpositive type in the context of random matrices was considered in \cite{PaWo,Wojtylak12b}.

\section{Non-symmetric extensions of Nevanlinna functions}

A \textit{Nevanlinna function} $M(z)$ is a function
which is defined and holomorphic on $\cmr$,
which is symmetric with respect to $\dR$, i.e.,
$\overline{M(z)}=M(\bar{z})$, $z \in \cmr$,
and which maps $\dC^{+}$ into $\dC^{+} \cup \dR$.
If for some $z_{0} \in \cmr$ the value $M(z_{0})$ is real,
then the function $M(z)$ is constant on $\dC$.
In the following mostly
nonconstant Nevanlinna functions
appear.

Let $I \subset \dR$ be an open interval such that
$\IM M(z_{n})\to 0$ for any sequence $(z_{n})$  from $\dC^{+}$
or $\dC^{-}$ converging to a point in $I$.  According
to the Schwarz reflection principle, the function
$M(z)$ has a holomorphic continuation across $I$ and
$M(x) \in \dR$ for $x \in I$, or, equivalently, the support
of $\sigma$ in the integral representation  \eqref{neva}
of $M(z)$ has a gap on $I$.

However, it may happen that a Nevanlinna function $M(z)$,
restricted to $\dC^{+}$, has a holomorphic continuation
$\widetilde M(z)$ across an interval $I \subset \dR$
without $\widetilde M(z)$ being real for $z \in I$.
In this case the symmetry property will be lost.
For example the constant function $M(z)=\ii$,
as defined on $\dC^{+}$, extends to a holomorphic
function on all of $\dC$  (although $\supp\sigma=\dR$).

In the sequel Nevanlinna functions are considered on $\dC^{+}$ and their
holomorphic continuations across (a part of) $\dR$ are being studied.

\begin{theorem}\label{hExt}
Let  $M(z)$ be an Nevanlinna function of the form
\eqref{neva}  and let $\Omega$
be a simply connected domain, symmetric with respect to $\dR$.
Then the following statements are equivalent:
\begin{enumerate}\def\labelenumi{\rm (\roman{enumi})}
\item the restriction of $M(z)$ to $\dC^{+}$ extends
to a holomorphic function in $\Omega\cup\dC^+$;
\item the measure $d\sigma$  in \eqref{neva} satisfies
$$
d \sigma(t) =\phi(t)\,dt, \quad t\in\Omega\cap\dR,
$$
where $\phi(z)$ is a real holomorphic function on $\Omega$.
\end{enumerate}
\end{theorem}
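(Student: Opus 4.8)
The plan is to read off the density of $\sigma$ from a Stieltjes inversion, and to organize the bookkeeping by comparing the two one–sided holomorphic extensions of $M$ from $\dC^{+}$ and from $\dC^{-}$. I first record a preliminary fact used in both directions: since $\Omega$ is a \emph{connected} simply connected domain that is symmetric about $\dR$, the set $I:=\Omega\cap\dR$ is a single open interval. Indeed, were it disconnected, a path in $\Omega$ joining two of its points together with its mirror image would form a loop in $\Omega$ with nonzero winding number about a real point outside $\Omega$, contradicting simple connectedness. Throughout write $\Omega^{\pm}=\Omega\cap\dC^{\pm}$.

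For (i)$\Rightarrow$(ii), let $F$ be the given holomorphic extension of $M|_{\dC^{+}}$ to $\dC^{+}\cup\Omega$, and set $G(z):=\overline{F(\bar z)}$, which is holomorphic on $\dC^{-}\cup\Omega$. For $z\in\dC^{-}$ one has $\bar z\in\dC^{+}$, so $G(z)=\overline{M(\bar z)}=M(z)$ by the symmetry of $M$; thus $G$ is the holomorphic continuation of $M|_{\dC^{-}}$. Hence $H:=F-G$ is holomorphic on all of $\Omega$, and for $x\in I$, since $G(x)=\overline{F(x)}$,
$$
H(x)=F(x)-\overline{F(x)}=2\ii\,\IM F(x),
$$
which is also the boundary jump $\lim_{\eps\downarrow0}\bigl(M(x+\ii\eps)-M(x-\ii\eps)\bigr)$ of $M$ across $I$. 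On the other hand, from \eqref{neva} the quantity $\tfrac1\pi\IM M(x+\ii\eps)$ equals, up to the term $b\eps/\pi$, the Poisson integral of $\sigma$; because $F$ is holomorphic across $I$ this Poisson integral converges, locally boundedly on $I$, to $\tfrac1\pi\IM F(x)$ as $\eps\downarrow0$. Since the Poisson integral also converges weakly-$*$ to $\sigma$, matching the two limits against test functions supported in $I$ forces $d\sigma(t)=\tfrac1\pi\IM F(t)\,dt$ on $I$. Finally $\phi:=\tfrac1{2\pi\ii}H$ is holomorphic on $\Omega$, agrees with this density on $I$, and satisfies $\overline{\phi(\bar z)}=\phi(z)$, so it is real holomorphic; this is exactly (ii).

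For (ii)$\Rightarrow$(i) I would decompose $M$ from \eqref{neva} into the affine term $a+bz$, the Cauchy transform $C(z)=\int_{I}\phi(t)/(t-z)\,dt$ of the density on $I$, and the remainder $M_{\mathrm{out}}$ given by the integral over $\dR\setminus I$. The remainder is already holomorphic on $\Omega$ because $\overline{\dR\setminus I}\cap\Omega=\emptyset$ (the endpoints of $I$ are not in the open set $\Omega$), and the affine term is entire, so only $C$ must be continued across $I$. Here the holomorphy of $\phi$ is decisive: deforming the contour $I$ downward into $\Omega^{-}$ by Cauchy's theorem shows that $C|_{\dC^{+}}$ continues holomorphically across $I$, and the Sokhotski--Plemelj jump identifies the continuation on $\Omega^{-}$ as $z\mapsto C(z)+2\pi\ii\,\phi(z)$. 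The latter is holomorphic on all of $\Omega^{-}$ since $C$ is holomorphic throughout $\dC^{-}$ and $\phi$ throughout $\Omega$, so patching the pieces yields a holomorphic extension of $M|_{\dC^{+}}$ to $\dC^{+}\cup\Omega$.

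The main obstacle is the measure-theoretic step in (i)$\Rightarrow$(ii): upgrading the locally bounded pointwise convergence of the Poisson integrals to genuine absolute continuity of $\sigma$ on $I$ with the prescribed density. This is handled by combining the bounded pointwise limit $\tfrac1\pi\IM F$ with the general weak-$*$ convergence of Poisson integrals to $\sigma$ via dominated convergence against compactly supported test functions, the needed local boundedness being furnished precisely by the continuity of $F$ on compact subintervals of $I$. In the converse direction the only delicate point is the contour-deformation and Plemelj computation, which is routine once $\phi$ is known to be holomorphic on $\Omega$.
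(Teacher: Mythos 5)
Your argument follows essentially the same route as the paper in both directions: in (i)$\Rightarrow$(ii) you take the identical function $\phi(z)=\bigl(F(z)-\overline{F(\bar z)}\bigr)/(2\pi\ii)$ and recover the density by Stieltjes inversion (you phrase it as weak-$*$ convergence of the Poisson integrals tested against functions supported in $I$, the paper phrases it via distribution functions and interchanges the limit using continuity of $F$ on a compact rectangle --- these are the same computation); in (ii)$\Rightarrow$(i) you use the same decomposition into a Cauchy transform of $\phi$ plus a remainder that reflects across $I$, and identify the continuation across $I$ as $C(z)+2\pi\ii\,\phi(z)$, which is exactly the paper's formula \eqref{berl} obtained there by integrating over a lens contour rather than by quoting Sokhotski--Plemelj.

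One concrete defect in your (ii)$\Rightarrow$(i): you define $C(z)=\int_{I}\phi(t)/(t-z)\,dt$ with $I=\Omega\cap\dR$ the \emph{whole} trace of $\Omega$ on $\dR$. Nothing in the hypotheses makes $I$ bounded, and when it is unbounded this integral need not converge --- the paper's own motivating example $M(z)\equiv\ii$ has $\Omega=\dC$, $I=\dR$, $\phi\equiv 1/\pi$, and $\int_{\dR}dt/(t-z)$ diverges; the representation \eqref{neva} only guarantees $\int_I\phi(t)/(t^2+1)\,dt<\infty$. The repair is routine and is precisely what the paper does: work with an arbitrary compact subinterval $[t_1,t_2]\subset I$, put the tail $\int_{\dR\setminus[t_1,t_2]}\bigl(1/(t-z)-t/(t^2+1)\bigr)\,d\sigma(t)$ into the Schwarz-reflectable remainder, continue $\int_{t_1}^{t_2}\phi(t)/(t-z)\,dt$ across $(t_1,t_2)$ by your jump argument, and then let $t_1,t_2$ exhaust $I$ (alternatively, keep the $t/(t^2+1)$ regularizer inside your Cauchy transform). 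With that localization your proof is correct; your preliminary observation that $\Omega\cap\dR$ is a single interval, which the paper uses tacitly, is a nice touch.
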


\begin{proof}
Note that the conditions imply that $\Omega\cap\dR\neq\emptyset$.

(i) $\Rightarrow$ (ii)
Denote the holomorphic extension of $M(z)$ onto $\dC^+\cup\Omega$
by $\widetilde M(z)$. Note that the function
$\phi(z)$ on $\Omega$, given by
$$
 \phi(z) = \frac{ \widetilde M(z) - \overline{\widetilde M(\bar z)}}{2\pi \ii},
 \quad z \in\Omega,
$$
is well-defined and holomorphic on  $\Omega$.  Furthermore, one has
\[
 \phi(t) = \frac{ \widetilde M(t) - \overline{\widetilde M(t)}}{2\pi \ii}
 =\frac{1}{\pi} \,\IM \widetilde M(t),
 \quad t \in \Omega \cap \dR.
\]
Let $t_{1}< t_{2}$ belong to the open connected set $\Omega \cap \dR$.
Then the Stieltjes inversion formula
$$
\sigma(t_2)-\sigma(t_1)
=\lim_{\eps\downarrow 0}\frac1\pi\int_{t_{1}}^{t_{2}} \Im M(t+\ii\eps)\,dt,
$$
can be rewritten as
\[
\begin{split}
\sigma(t_2)-\sigma(t_1)
&=\lim_{\eps\downarrow 0}\frac1\pi\int_{t_1}^{t_2} \Im \widetilde M(t+\ii\eps)\,dt \\
& =\frac1\pi\int_{t_1}^{t_2} \Im \widetilde M(t)\,dt
=\int_{t_1}^{t_2} \phi(t)\,dt,
\end{split}
\]
since for $\eps>0$ sufficiently small one has the inclusion
\[
\{z=t+\ii y \in \dC:\, t \in [t_1,t_2], \, 0 \le y \le \eps\} \sbs\Omega.
\]
Since $t_1 < t_2$ are arbitrary,
it follows that the measure $d \sigma$ is absolutely continuous
and that $d\sigma(t)=\phi(t)\,dt$ on $\Omega\cap\dR$.

(ii) $\Rightarrow$ (i) Assume that $d\sigma(t)=\phi(t)\,dt$, $t\in\Omega\cap\dR$,
with  $\phi(z)$ holomorphic on $\Omega$.
Let $t_{1}< t_{2}$ be real numbers such that $[t_1,t_2]\sbs \Omega\cap\dR$
and let $\Omega^-=\Omega\cap\dC^-$.
It will be shown that $M(z)$ extends holomorphically to
$$
\Omega_0=\dC^+\cup(t_1,t_2)\cup\Omega^-.
$$
This will imply (i), since $t_{1},t_{2}$ are arbitrary and
$\Omega$ is simply connected.

In order to prove the claim, let $\Gamma_0$ be a curve that joins
$t_2$ with $t_1$, lying entirely in $\Omega\cap\dC^+$,
except for its boundary points. Let $\Gamma=\Gamma_0\cup[t_1,t_2]$
with the usual orientation.
The interior of $\Gamma$ is denoted by $\ins\Gamma$.
Write the function $M(z)$ as
$$
M(z)=\int_{t_1}^{t_2}\frac{ \phi(t)}{t-z}\,dt+\tilde a +bz+
\int_{\dR\setminus[t_1,t_2]}\left(\frac1{t-z}-\frac{t}{t^2+1}\right)d\sigma(t),
$$
with the constant $\tilde a$ given by
\[
\tilde a=a-\int_{t_1}^{t_2} \frac{t\ \phi(t)}{t^2+1}\,dt.
\]
Note that the function
$$
\tilde a +bz+
\int_{\dR\setminus[t_1,t_2]} \left( \frac1{t-z}-\frac{t}{t^2+t} \right) d\sigma(t)
$$
can be extended to a holomorphic function in
$\dC^+\cup(t_1,t_2)\cup\dC^-$ via the
Schwarz reflection principle. Hence, to show that
$M(z)$ extends holomorphically to $\Omega_0$,
it suffices to show that the function
$$
M_1(z)=\int_{t_1}^{t_2}\frac{ \phi(t)}{t-z}\,dt
$$
can be extended to a holomorphic function across $(t_{1}, t_{2})$. 
According to Cauchy's theorem one has
$$
\phi(z)=\frac1{2\pi\ii}\int_{\Gamma}\frac{\phi(\zeta)}{\zeta-z}\,d\zeta,
\quad z\in\ins\Gamma,
$$
and, hence,
\begin{equation}\label{berl}
M_1(z)
=2\pi\ii\phi(z)+\int_{\Gamma_0}\frac{\phi(\zeta)}{\zeta-z}\,d\zeta,
\quad z\in\ins\Gamma.
\end{equation}
The function $\phi(z)$ is holomorphic in $\Omega_0$ and the formula
\[
\int_{\Gamma_0}\frac{\phi(\zeta)}{\zeta-z}\,d\zeta,
\quad z\in \dC\setminus\Gamma_0,
\]
defines a holomorphic function in the simply connected domain
$$
A=\Omega^-\cup(t_1,t_2)\cup\ins\Gamma.
$$
Therefore, the right-hand side of \eqref{berl}
defines a holomorphic function in
$A$
while $M_1(z)$ is a holomorphic function in
$\dC^+$. Note that $A\cap \dC^+=\ins\Gamma$,
which is a connected set, and $A\cup \dC^+=\Omega_0$.
Hence,  $M_1(z)$ extends holomorpically  to $\Omega_0$.
\end{proof}

\begin{theorem}\label{QExt}
Let $Q(z)$ be in $\mathbf{N}_{1}$
with the representation $Q(z)=R(z)M(z)$
as in \eqref{nev} and \eqref{nev1} with
$\alpha, \beta \in \dC^{+} \cup \dR$.
Let $\Omega$ be a simply connected domain, symmetric with respect to $\dR$
and assume that $\beta \notin\Omega$ and $\alpha\in\Omega\cap\dR$.
Then the following statements are equivalent:
\begin{enumerate}\def\labelenumi{\rm (\roman{enumi})}
\item
the restriction of $Q(z)$ to $\dC^{+} \setminus \{\beta\}$
extends to a holomorphic function in $\Omega\cup\dC^+ \setminus \{\beta\}$;
\item the function $M(z)$ is of the form
\begin{equation}\label{dM}
M(z) =
                M_1(z) +\frac{m_0}{\alpha-z}, 
\quad t\in\Omega\cap\dR,
\end{equation}
where $M_1(z)$ is a Nevanlinna function such that the restriction of $M_1(z)$ to $\dC^+$ extends to a holomorphic function in $\dC^+\cup\Omega$ and $m_0\ge0$;
\item
the measure $d \sigma$ for $M(z)$ in \eqref{neva} satisfies
\begin{equation}\label{dsigma}
d\sigma(t) =
                \phi(t) dt +m_0 \delta_{\alpha}(t),
\quad t\in\Omega\cap\dR,
\end{equation}
where $\phi(z)$ is a real holomorphic function on $\Omega$, $m_{0} \ge 0$, and
$\delta_{\alpha}$ is the Dirac measure at $\alpha$.
\end{enumerate}
If instead of  $\alpha\in\Omega\cap\dR$ one assumes $\alpha \notin\Omega$, then the equivalences above hold with $m_0=0$ in statements {\rm(ii)} and {\rm(iii)}.

\end{theorem}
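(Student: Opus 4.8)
The plan is to reduce everything to Theorem \ref{hExt} by exploiting the factorization $Q=RM$. First observe that since $\beta\notin\Omega$ and $\Omega$ is symmetric, also $\bar\beta\notin\Omega$, so the rational factor $R(z)$ has no poles in $\Omega$; its only zeros in $\Omega$ are at $\alpha$, and because $\alpha\in\dR$ the factor $(z-\alpha)(z-\bar\alpha)=(z-\alpha)^2$ contributes a zero of order two there. Hence $R$ is holomorphic and nonvanishing on $\Omega\setminus\{\alpha\}$ and has a double zero at $\alpha$. I would first dispatch the equivalence (ii)$\Leftrightarrow$(iii): writing the point mass $m_0\delta_\alpha$ as the summand $m_0/(\alpha-z)$ of $M$, the remaining Nevanlinna function $M_1$ has measure $\sigma-m_0\delta_\alpha$, and Theorem \ref{hExt} says precisely that $M_1$ extends holomorphically to $\dC^+\cup\Omega$ if and only if $\sigma-m_0\delta_\alpha$ is absolutely continuous on $\Omega\cap\dR$ with a real holomorphic density $\phi$. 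This is just bookkeeping on top of \ref{hExt}.

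For (ii)$\Rightarrow$(i) I would substitute the decomposition of $M$ into $Q=RM$, giving $Q=RM_1+R\cdot m_0/(\alpha-z)$. The first term extends holomorphically to $\Omega\cup\dC^+\setminus\{\beta\}$ as a product of $R$ (holomorphic there) and the holomorphic extension of $M_1$; the second term is holomorphic on $\Omega$ because the double zero of $R$ at $\alpha$ absorbs the simple pole of $1/(\alpha-z)$. Thus $Q$ extends, which is (i).

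The substance is the direction (i)$\Rightarrow$(ii). Denote by $\wt Q$ the extension of $Q$ to $\Omega\cup\dC^+\setminus\{\beta\}$ and set $\wt M:=\wt Q/R$ on $\Omega\setminus\{\alpha\}$. Since $R$ is nonvanishing there and $\wt Q/R$ agrees with $M$ on the connected overlap $\dC^+\cap\Omega$, the function $\wt M$ is a holomorphic continuation of $M|_{\dC^+}$ to $(\dC^+\cup\Omega)\setminus\{\alpha\}$, with at most a pole of order two at $\alpha$ coming from the double zero of $R$. The key point, and the main obstacle, is to show that this pole is in fact simple with the correct sign. Here I would invoke the Nevanlinna property of $M$: the limit $\lim_{y\downarrow0}(-\ii y)M(\alpha+\ii y)=\sigma(\{\alpha\})=:m_0\ge0$ exists and is finite. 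A genuine double pole of $\wt M$ would make $(-\ii y)\wt M(\alpha+\ii y)$ blow up like $1/y$, so finiteness forces the second-order part to vanish, and the same computation identifies the principal part as $m_0/(\alpha-z)$ with $m_0\ge0$. Consequently $\wt M-m_0/(\alpha-z)$ is holomorphic across $\alpha$, so $M_1:=M-m_0/(\alpha-z)$ is a Nevanlinna function (its measure $\sigma-m_0\delta_\alpha$ is still nonnegative) whose restriction to $\dC^+$ extends holomorphically to $\dC^+\cup\Omega$; this is exactly (ii).

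Finally, for the variant $\alpha\notin\Omega$: then neither $\alpha$ nor $\bar\alpha$ lies in $\Omega$, so $R$ is holomorphic and nonvanishing on all of $\Omega$. Repeating the argument, $\wt M=\wt Q/R$ extends $M$ holomorphically to the whole of $\dC^+\cup\Omega$ without any singularity at $\alpha$, so no point mass can occur and $m_0=0$; correspondingly the term $m_0/(\alpha-z)$ drops out of (ii) and (iii). The only delicate step throughout remains the passage from a possible higher-order pole to a simple one, which is precisely where the class $\mathbf{N}_1$, via the underlying Nevanlinna function $M$, enters.
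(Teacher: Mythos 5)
Your proposal is correct and follows the paper's overall architecture --- divide out the rational factor $R$, reduce (ii)$\Leftrightarrow$(iii) to Theorem \ref{hExt}, and get (ii)$\Rightarrow$(i) from the cancellation of the simple pole by the double zero of $(z-\al)^2$ --- but it handles the one nontrivial step of (i)$\Rightarrow$(ii), namely ruling out a double pole of $\widetilde M=\widetilde Q/R$ at $\al$, by a different lemma. The paper invokes the nontangential-limit characterization of the GZNT, $\widetilde Q(\al)=\lim_{z\widehat\to \al}Q(z)=0$, so that $\widetilde Q$ already vanishes at $\al$ and division by $(z-\al)^2$ leaves at most a simple pole, whose residue is then read off from $\lim_{z\widehat\to \al}(z-\al)M(z)$; this also sets up the dichotomy $\widetilde Q'(\al)=0$ versus $\widetilde Q'(\al)\neq 0$ that is reused in Proposition \ref{zeros>0}. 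You instead use only the Herglotz representation \eqref{neva} of $M$: the finiteness of $\lim_{y\downarrow 0}(-\ii y)M(\al+\ii y)=\sigma(\{\al\})$ forces the coefficient of $(z-\al)^{-2}$ to vanish and simultaneously identifies the principal part as $\sigma(\{\al\})/(\al-z)$ with the correct sign $m_0=\sigma(\{\al\})\ge 0$. Your route is slightly more self-contained (it needs no facts about generalized zeros beyond the factorization itself, only the integral representation of an ordinary Nevanlinna function) and it pins down the sign of $m_0$ cleanly; the paper's route is shorter once the GZNT characterization is taken as known. Both arguments are sound, and the remaining parts of your proof (the bookkeeping for (ii)$\Leftrightarrow$(iii), the absorption of the pole in (ii)$\Rightarrow$(i), and the $\al\notin\Omega$ variant) coincide with the paper's.
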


\begin{proof}
The proof will be given for the case $\alpha\in\Omega\cap\dR$;
the proof in the case $\alpha\notin\Omega$ is  left as an exercise.

(i) $\Rightarrow$ (ii)
Let $\widetilde Q(z)$ be a holomorphic extension of $Q(z)$ to  $\Omega\cup\dC^+\setminus\set\beta$ and let
$$
 \widetilde M(z)=R_0^{-1}(z)\frac1{(z-\al)^2}\widetilde Q(z),
$$
 where $R_{0}(z)=1/(z-\beta)(z-\bar\beta)$
or $R_{0}(z)\equiv 1$, depending on the position of the GPNT $\beta$. Note that $\widetilde M(z)$
is holomorphic in $\Omega\cup\dC^+\setminus\set{\al}$ and that $\widetilde M(z)=M(z)$ for $z\in\dC^+$.
%
Observe that
$$
\widetilde Q(\al)=\lim_{z\widehat\to \alpha} Q(z)=0,
$$
and, hence,  $\alpha$ is a pole of order at most one of $\widetilde M(z)$.
If
$$
\widetilde Q'(\alpha)=0,
$$
then $\widetilde M(z)$ is holomorphic at $\al$, and by Theorem \ref{hExt}
one gets a representation \eqref{dsigma} with $m_0=0$.
If $\widetilde Q'(\al)\neq 0$, then it follows from
$$
\lim_{z\widehat\to \al} (z-\al)\ M(z)
=R_0^{-1}(\al)\,{\widetilde Q'(\al) },
$$
that the measure $d\sigma$ in the representation \eqref{neva} of $M(z)$ has a mass point at $\al$. Therefore the function
\begin{equation}\label{M1}
 M_1(z)= M(z)-\frac{m_0}{\al-z} \quad \mbox{with} \quad m_0= \lim_{z\widehat\to \al} (z-\al)\ M(z),
\end{equation}
is a Nevanlinna function. Put
$$
\widetilde M_1(z)=\widetilde M(z) -\frac{m_0}{\al-z},
$$
and note that
$$
\lim_{z\widehat\to \al} (z-\al)\ \widetilde M_1(z)
=0.
$$
Thus,
$\widetilde M_1(z)$ is holomorphic at $\al$ and in consequence in $\dC^+\cup\Omega$.

The implication (ii) $\Rightarrow$ (i) is obvious and the equivalence (ii) $\Leftrightarrow$ (iii) is a direct consequence of
of  Theorem \ref{hExt}.
\end{proof}

\section{Global properties of the function $\al(\tau)$}\label{2}

Before continuing with extension properties across $\dR$ an open problem
from \cite{SWW} will be considered. To state
it, a notion of convergence for a class of meromorphic
functions is required; see \cite{LaLuMa} for the original treatment.
Let $\cD$ be a nonempty open subset of the complex plane,
and let $(Q_n)$ be a sequence of functions which are meromorphic on $\cD$.
The sequence $(Q_n)$ is said to \textit{converge locally uniformly on} $\cD$
to the function $Q$, if for each nonempty open set $\cD_0\sbs\dC$
with compact closure $\overline\cD_0\sbs\cD$
there exists an index $n_0(\cD_0)$ such that for $n>n_0(\cD_0)$
the functions $(Q_n)$ are holomorphic on $\cD_0$ and
 $$
 \lim_{n\to\infty} Q_n(z)=Q(z),\quad \text{uniformly on }\cD_0.
 $$
The extended complex plane is denoted by $\overline\dC$.

\begin{theorem}\label{cont}
Let $Q(z)$ be an $\mathbf{N}_1$ function and
let $\tau_n \in \dR$ converge to $\tau \in \overline\dR$.
Then $Q_{\tau_n}(z)$
converges locally uniformly to
$Q_{\tau}(z)$ on $\dC^+\setminus\set{\beta(\tau)}$.
\end{theorem}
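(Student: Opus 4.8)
The plan is to reduce the whole statement to one explicit algebraic identity for the difference $Q_{\tau_n}-Q_\tau$, and to control the denominators that occur. First I would record the structural fact underlying the assertion. Since $Q_\tau\in\mathbf N_1$, its factorization \eqref{nev}--\eqref{nev1} reads $Q_\tau=R_\tau M_\tau$ with $M_\tau$ an ordinary Nevanlinna function, hence holomorphic on $\dC^+$, and $R_\tau$ rational whose only pole in $\dC^+$ sits at $\beta(\tau)$. Thus $Q_\tau$ is holomorphic on $\dC^+\setminus\{\beta(\tau)\}$, and for finite $\tau$ the equation $1+\tau Q(z)=0$ has in $\dC^+$ the single solution $z=\beta(\tau)$ (no removable coincidence occurs, because $Q=\tau$ and $Q=-1/\tau$ cannot hold simultaneously for real $\tau$). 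In particular $1+\tau Q$ does not vanish on any compact $K\subset\dC^+\setminus\{\beta(\tau)\}$; for $\tau=\infty$ the analogue is that $Q$ itself has no zero on $K$, since the poles of $Q_\infty=-1/Q$ are exactly the zeros of $Q$.

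Next I would fix a compact $K\subset\dC^+\setminus\{\beta(\tau)\}$ and treat first the case $\tau\in\dR$ with $K$ disjoint from the pole $\beta$ of $Q$. A direct computation gives the identity
\[
Q_{\tau_n}(z)-Q_\tau(z)=\frac{(\tau-\tau_n)\,\big(Q(z)^2+1\big)}{\big(1+\tau_n Q(z)\big)\big(1+\tau Q(z)\big)}.
\]
On such a $K$ the function $Q$ is holomorphic and bounded and $|1+\tau Q|$ is bounded below by a positive constant; as $\tau_n\to\tau$, the factor $1+\tau_n Q$ converges to $1+\tau Q$ uniformly on $K$, hence is also bounded below for large $n$. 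This does two things at once: it shows that $Q_{\tau_n}$ has no pole in $K$ for large $n$ (the moving poles $\beta(\tau_n)$ cannot enter $K$), and it forces the right-hand side to $0$ uniformly on $K$. The case $\tau=\infty$ is disposed of by the companion identity
\[
Q_{\tau_n}(z)-Q_\infty(z)=\frac{Q(z)^2+1}{Q(z)\big(1+\tau_n Q(z)\big)},
\]
whose right-hand side tends to $0$ uniformly on any $K$ on which $Q$ is bounded and bounded away from $0$.

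It remains to handle a compact $K$ meeting the pole $\beta$ of $Q$; here one has $\tau\neq0$ (if $\tau=0$ then $\beta=\beta(\tau)\notin K$, so this situation does not arise), and then $\beta\neq\beta(\tau)$, because at a pole of $Q$ one computes $Q_\tau=1/\tau$ for finite $\tau\neq0$ and $Q_\infty=0$, so $\beta$ is a point of holomorphy of $Q_\tau$. I would cover $K$ by a small closed disk $K_1$ around $\beta$ disjoint from $\beta(\tau)$ together with $K_2=K\setminus\ins K_1$. On $K_2$ the previous paragraph applies verbatim. On $K_1$ I would pass to the holomorphic function $g=1/Q$, which vanishes at $\beta$, and rewrite $Q_{\tau_n}=(1-\tau_n g)/(g+\tau_n)$ and $Q_\tau=(1-\tau g)/(g+\tau)$; since $g$ is bounded on $K_1$ and $g+\tau$ is bounded away from $0$ there, $\tau_n\to\tau$ yields uniform convergence and holomorphy for large $n$ (an analogous division by $\tau_n$ settles $\tau=\infty$, where $Q_\infty=-g$). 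The main obstacle, conceptually, is precisely the interplay of the two singular loci: the pole $\beta$ of $Q$, which is an \emph{apparent} singularity of every $Q_\tau$, and the moving poles $\beta(\tau_n)$, which must be shown not to accumulate in $K$. The identities above dispose of both simultaneously — the first by cancellation through $g=1/Q$, the second by the uniform lower bound on the denominators.
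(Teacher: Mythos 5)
Your argument is correct, but it takes a genuinely different route from the paper. The paper only verifies convergence on a single small open set $\cD$ with $\overline\cD\subset\dC^+\setminus\{\beta(0),\beta(\tau)\}$ (i.e.\ away from both the pole of $Q$ and the limit pole), and then invokes the convergence theorem of Langer--Luger--Matsaev \cite[Theorem 1.4]{LaLuMa} as a black box to upgrade this to locally uniform convergence on all of $\dC^+\setminus\{\beta(\tau)\}$; that external theorem is what absorbs both the apparent singularity at $\beta$ and the possible accumulation of the moving poles $\beta(\tau_n)$. You instead give a self-contained elementary proof: the same algebraic identity with denominator control handles compacta away from $\beta$ (your identity, with the factor $Q^2+1$ in the numerator, is in fact the correct one --- the paper's displayed estimate silently drops it), and the substitution $g=1/Q$ handles a small disk around $\beta$, where you correctly observe that $\beta\neq\beta(\tau)$ for $\tau\neq0$ and that the zeros of $1+\tau_n Q$ in $\dC^+$ are confined to $\beta(\tau_n)$, so the uniform lower bound on $|g+\tau_n|$ simultaneously excludes the moving poles from the disk and yields uniform convergence. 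The only point worth making explicit is that the disk $K_1$ around $\beta$ must also avoid $\alpha$ (the unique zero of $Q$ in $\dC^+$, since the Nevanlinna factor $M$ has none), so that $g=1/Q$ is actually holomorphic and bounded there; since $\alpha\neq\beta$, this is arranged by shrinking the disk. What your approach buys is independence from \cite{LaLuMa}; what the paper's approach buys is brevity and the fact that the same citation immediately delivers $\alpha(\tau_n)\to\alpha(\tau)$, which is needed anyway in the proof of Theorem \ref{kurve}.
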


\begin{proof}
Let $\cD$  be some open, bounded subset of $\dC^+$ with $\overline\cD\subset\dC^+\setminus\{\beta(0),\beta(\tau)\}$ and let $\tau_n\to\tau$.
Consider first the case $\tau\in\dR$. Since $1+\tau Q(z)$ has no zero on $\overline\cD$, it follows that
$$
\inf_{z\in\cD} |1+\tau Q(z)|=d>0.
$$
The inverse triangle inequality
$$
|1+\tau_n Q(z)|\ge|1+\tau Q(z)|-|\tau-\tau_n||Q(z)|
$$
together with the fact that $Q(z)$ is bounded on $\overline\cD$
implies that for some $n_0$ the relation
$$
\inf_{z\in\cD} |1+\tau_n  Q_{\tau}(z)|\ge d/2
$$
for all $n>n_0$ holds.
Consequently
$$
|Q_{\tau_n}(z)-Q_{\tau}(z)|
= \frac{|\tau-\tau_n|}{|1+\tau Q(z)|\ |1+\tau_nQ(z)|}\leq  4 d^{-2} |\tau-\tau_n|,
$$
which means that $Q_{\tau_n}(z)$ converges locally uniformly to $Q_{\tau}(z)$ on $\cD$.
By \cite[Theorem 1.4]{LaLuMa}, $Q_{\tau_n}(z)$ converges locally uniformly to $Q_{\tau}(z)$ on $\dC^+\setminus\{\beta(\tau)\}$.

Now consider the case $\tau=\infty$.
Since $\beta(\infty)=\alpha$, the function $Q(z)$  is bounded and bounded away from zero on $\overline\cD$. Consequently, the locally uniform convergence $Q_{\tau_n}(z)\to Q_{\infty}(z)$ follows from
$$
Q_{\tau_n}(z)-Q_{\infty}(z)=\frac{1}{1+\tau_n Q(z)}\left(Q(z)+\frac{1}{Q(z)}\right) \to 0,\quad n\to\infty,
$$
uniformly on $\cD$ and again \cite[Theorem 1.4]{LaLuMa}.

\end{proof}

\begin{theorem}\label{kurve}
The function $\tau \to \al(\tau)$ is continuous and the set
\[
\{\al(\tau):\,\tau \in \dR \cup \{\infty\}\}
\]
on the Riemann sphere is homeomorphic to a circle.
\end{theorem}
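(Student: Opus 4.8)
The plan is to view $\alpha$ as a single map from the compactified parameter set $\overline\dR=\dR\cup\{\infty\}$, which is itself homeomorphic to a circle, into the Riemann sphere $\overline\dC$, and to prove that this map is a continuous injection. Since $\overline\dR$ is compact and $\overline\dC$ is Hausdorff, a continuous injective map $\overline\dR\to\overline\dC$ is automatically a homeomorphism onto its image; that image is then homeomorphic to $\overline\dR$, hence to a circle. Thus everything reduces to two assertions: continuity of $\tau\mapsto\alpha(\tau)$ and its injectivity.

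For continuity, let $\tau_n\to\tau$ in $\overline\dR$. By Theorem \ref{cont} the functions $Q_{\tau_n}(z)$ converge to $Q_\tau(z)$ locally uniformly on $\dC^+\setminus\{\beta(\tau)\}$. The decisive input is then the convergence behaviour of the generalized zero of nonpositive type under locally uniform convergence of $\mathbf{N}_1$ functions established in \cite{LaLuMa}, which yields $\alpha(\tau_n)\to\alpha(\tau)$ in $\overline\dC$. Two points support this. First, $\alpha(\tau)\neq\beta(\tau)$ always holds, since a coincidence would produce a cancellation in the factorization \eqref{nev}--\eqref{nev1} and reduce $Q_\tau$ to class $\mathbf{N}_0$; hence deleting the single point $\beta(\tau)$ from the domain of convergence does not obstruct the tracking of $\alpha(\tau)$. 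Second, because Theorem \ref{cont} only supplies convergence on the open half--plane, the cases where $\alpha(\tau)$ lies on $\dR$ or equals $\infty$ genuinely need the sphere--valued convergence of nonpositive zeros from \cite{LaLuMa}, not a naive interior limit.

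For injectivity I would use the identity $Q(\alpha(\tau))=\tau$, valid in the following senses. If $\alpha(\tau)=z_0\in\dC^+$, then $z_0$ is a genuine zero of $Q_\tau$, so $(Q(z_0)-\tau)/(1+\tau Q(z_0))=0$ forces $Q(z_0)=\tau$. If $\alpha(\tau)=x_0\in\dR$, the characterization of the generalized zero of nonpositive type through nontangential boundary limits (see \cite{DHS1,DLLSh,L}) gives $\lim_{z\widehat\to x_0}Q_\tau(z)=0$, hence $\lim_{z\widehat\to x_0}Q(z)=\tau$. Finally, for $\tau=\infty$ one has $\alpha(\infty)=\beta(0)$, the generalized pole of nonpositive type of $Q$, at which the nontangential value of $Q$ is $\infty$. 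In every case the parameter $\tau$ is recovered from the point $\alpha(\tau)$ as the (nontangential) value of $Q$ there, so $\alpha(\tau_1)=\alpha(\tau_2)$ forces $\tau_1=\tau_2$; in particular a finite $\tau$ cannot share a value with $\tau=\infty$, since at $\beta(0)$ the value of $Q$ is infinite.

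The main obstacle I expect is precisely continuity across the boundary: Theorem \ref{cont} controls $Q_{\tau_n}$ only on the open upper half--plane and away from $\beta(\tau)$, whereas the generalized zero of nonpositive type may migrate onto $\dR$ or to $\infty$. This is already visible in the model $Q(z)=-(z^2+1)$, where $\alpha(\tau)=\ii\sqrt{1+\tau}$ for $\tau>-1$ meets the real axis at $\tau=-1$ and then runs along $\dR$; there $Q(\alpha(\tau))=\tau$ continues to hold by real analyticity, and $\alpha(\infty)=\infty=\beta(0)$ closes the curve. Handling this transition, together with the corresponding degenerate instances of the injectivity step (real points where the nontangential limit of $Q$ is infinite, and the identification $\alpha(\infty)=\beta(0)$), is exactly where the full strength of the convergence results of \cite{LaLuMa} is required.
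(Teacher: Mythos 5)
Your argument is correct and follows essentially the same route as the paper: continuity via Theorem \ref{cont} combined with the convergence result for generalized zeros of nonpositive type in \cite[Theorem 1.4]{LaLuMa}, then injectivity, then the standard fact that a continuous injection from the compact circle $\overline{\dR}$ into the Hausdorff space $\overline{\dC}$ is a homeomorphism onto its image. The only difference is that you prove injectivity directly from the nontangential identity $Q(\al(\tau))=\tau$, whereas the paper simply cites \cite{SWW} for this point; your argument is a correct (and essentially the standard) justification of that citation.
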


\begin{proof}
Let $(\tau_{n})$ be some sequence which converges to $\tau$.
By theorem \ref{cont}, the sequence $(Q_{\tau_n}(z))$
converges locally uniformly to
$Q_{\tau}(z)$ on $\dC^+\setminus\set{\beta(\tau)}$.
Now it follows from  \cite[Theorem 1.4]{LaLuMa}
that $\al(\tau_n)\to \al(\tau)$ if $n\to\infty$.
This shows that the function $\tau \to \al(\tau)$ is continuous.
Since it is also injective (\cite{SWW}) and the extended real line is compact on the Riemann sphere,
the inverse of $\tau \to \al(\tau)$ is continuous as well.
\end{proof}


Now the original topic about nonsymmetric extensions of Nevanlinna functions
is taken up again.

\begin{proposition}
Let $Q(z)$ be an $\mathbf{N}_1$ function.
Assume that  $\Omega$ is a simply connected domain with
$\Omega\cap\dR\neq\emptyset$  such that $\beta,\bar\beta\notin\Omega$,
and assume that $Q(z)$ extends to a  holomorphic function
$\widetilde Q(z)$ in $\Omega\cup\dC^+$. If the set
$$
A=\set{\al(\tau):\tau\in\dR\cup\set\infty}\cap\dR\cap\Omega
$$
has an accumulation point in $\Omega$,
then $\Omega\cap\dR$ is outside the support of $\sigma$.
\end{proposition}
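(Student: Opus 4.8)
The plan is to prove the statement in two stages: first, to show that the accumulation of real generalized zeros forces the extension $\widetilde Q$ to be \emph{symmetric} on $\Omega$, and then to transfer this symmetry to the measure $\sigma$ through the factorization $Q=RM$. The elementary fact underlying everything is that every point of $A$ is a point at which $\widetilde Q$ takes a real value. Indeed, if $x=\al(\tau)\in\dR\cap\Omega$ is the GZNT of $Q_\tau$, then near $x$ the function $Q_\tau=(\widetilde Q-\tau)/(1+\tau\widetilde Q)$ is holomorphic (the point $x$ is a zero, not a pole, of the extension) and $x$ is an honest zero of it; hence $\widetilde Q(x)=\tau\in\dR$. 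The value $\tau=\infty$ may be ignored, since $\al(\infty)=\be\notin\Omega$. Consequently the function
\[
\Phi(z)=\widetilde Q(z)-\overline{\widetilde Q(\bar z)},
\]
which is holomorphic on the $\dR$-symmetric domain $\Omega$, vanishes at every point of $A$.

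The main step is then immediate: $\Omega$ is simply connected, hence connected, and $A$ has an accumulation point in $\Omega$, so by the identity theorem $\Phi\equiv0$ on $\Omega$. Thus $\widetilde Q(\bar z)=\overline{\widetilde Q(z)}$ throughout $\Omega$, and in particular $\widetilde Q$ is real on $\Omega\cap\dR$. In other words, the mere accumulation of the real GZNT's forces the a priori non-symmetric extension to satisfy the symmetry principle after all.

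It remains to read off the statement about $\sigma$. The rational factor $R$ has real coefficients, so $R(\bar z)=\overline{R(z)}$, and by hypothesis its poles $\be,\bar\be$ lie outside $\Omega$; hence on $\Omega$ the only possible zeros of $R$ are $\al,\bar\al$. Consequently $M=\widetilde Q/R$ extends holomorphically across $(\Omega\cap\dR)\setminus\{\al\}$ with real boundary values there, so that $\IM M(t+\ii\eps)\to0$ as $\eps\downarrow0$ for every such $t$. Applying the Stieltjes inversion formula on any compact subinterval $[t_1,t_2]\subset(\Omega\cap\dR)\setminus\{\al\}$ — the interchange of limit and integral being justified exactly as in the proof of Theorem \ref{hExt}, since a thin rectangle over $[t_1,t_2]$ lies in $\Omega$ — yields $\sigma(t_2)-\sigma(t_1)=0$, so $\sigma$ carries no mass on $\Omega\cap\dR$ away from $\al$.

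The step I expect to be the genuine obstacle is the behaviour at the single exceptional point $\al$, where $R$ has a double zero. There $M$ might a priori have a simple pole, i.e.\ $\sigma$ an atom $m_0\delta_\al$, precisely the term isolated in Theorem \ref{QExt}. To control it one analyzes the local order of vanishing of $\widetilde Q$ at the GZNT: writing $R(z)=(z-\al)^2g(z)$ with $g$ holomorphic and nonvanishing near $\al$, and $\widetilde Q(z)=c_k(z-\al)^k+\cdots$ with $c_k\neq0$, one gets $M\sim (c_k/g(\al))(z-\al)^{k-2}$, so an atom is present exactly when $k=1$. Closing the argument thus amounts to excluding a simple zero of $\widetilde Q$ at $\al$ (equivalently $m_0=0$), which I would attack through the nonpositive-type characterization of the GZNT together with the local normal forms attached to the classification (A)--(E); this is where the real content beyond the identity-theorem step lies, and care is needed to separate the continuous part of $\sigma$ from a possible mass point sitting at the generalized zero itself.
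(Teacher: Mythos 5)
Your core argument is the paper's: every point of $A$ is a point where $\widetilde Q$ takes a real value $\tau$, so the antisymmetric part $\widetilde Q(z)-\overline{\widetilde Q(\bar z)}$ vanishes on a set with an accumulation point in $\Omega$ and hence identically, by the identity theorem; the paper works instead with the symmetrization $W(z)=\tfrac12\bigl(\widetilde Q(z)+\overline{\widetilde Q(\bar z)}\bigr)$ and concludes $\widetilde Q\equiv W$, which is the same step. You are in fact more explicit than the paper about the remaining bookkeeping (dividing by $R$ and running Stieltjes inversion on compact subintervals avoiding $\al$), and you correctly isolate the one point where something could go wrong: a possible atom $m_0\delta_{\al}$ of $\sigma$ at the GZNT itself, where $R$ has a double zero. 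The paper's proof passes over this point in silence.

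The step you leave open --- excluding $m_0>0$ --- cannot be closed, and you should not try to close it: the statement as literally phrased is too strong. Take $Q(z)=-z\in\mathbf{N}_1$, so that $M(z)=-1/z$, $d\sigma=\delta_0$, $\al=0$, $\be=\infty$. Then $Q_\tau(z)=-(z+\tau)/(1-\tau z)$ has its GZNT at $\al(\tau)=-\tau$ (the derivative there equals $-1/(1+\tau^2)<0$, which is case (1) of Proposition \ref{zeros>0}; the paper itself lists $Q(z)=-z$ as an example of type (A)/(1)). For $\Omega$ the unit disk the set $A$ contains all of $(-1,1)$ and certainly accumulates in $\Omega$, yet $0\in\supp\sigma\cap\Omega\cap\dR$. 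What the identity-theorem argument actually yields --- and what the paper's own proof in effect ends with (``$\Omega\cap\dR$ is contained in the gap of $Q(z)$'') --- is that $\phi\equiv 0$ in the representation \eqref{dsigma} of Theorem \ref{QExt}, i.e.\ $d\sigma=m_0\delta_{\al}$ on $\Omega\cap\dR$: a spectral gap in the sense of \cite{SWW}, where a point mass at the GZNT is permitted. With the conclusion read that way your argument is complete as written; the obstacle you flagged is genuine, but it is a defect of the statement rather than of your proof.
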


\begin{proof}
Consider the function
$$
W(z)=\frac{\widetilde Q(z)+\overline{\widetilde Q(\bar z)}}2,
$$
which is holomorphic in $\Omega\cup\dC^+$ and real on $\Omega\cap\dR$.
Furthermore,
$$
\tilde Q(z)=W(z),\quad z\in A,
$$
since $Q(z)\in\dR$ for $z\in A$. Hence, $Q(z)=W(z)$ and if $z,\bar z\in\Omega$ then
$$
Q(\bar z)=W(\bar z)=W(z)=Q(z).
$$
Consequently, $\Omega\cap\dR$ is contained in the gap of $Q(z)$.
\end{proof}

\section{The behavior of $\al(\tau)$ meeting the real line}

Proposition \ref{zeros>0} below is a generalization  of \cite[Proposition 2.7]{SWW} for the case when $z_0\in\dR$ and the function $Q(z)$ extends holomorphically to a  holomorphic function $\tilde Q(z)$ in some simply connected neighborhood
$\Omega$ of $z_0$. Compared to \cite[Proposition 2.7]{SWW}, now it is not assumed  that the extension satisfies the symmetry principle, i.e. the point $z_0$ may lay in the support of $\sigma$. It will be frequently used that the $k$--th nontangential derivative at $z_0$ coincides with the derivative of the extension $\widetilde Q(z)$ at $z=z_0$.

\begin{proposition}\label{zeros>0}
Let $Q(z) \in \mathbf{N}_1$ and assume that
$Q(z)$ extends to a  holomorphic function  $\widetilde Q(z)$
in some simply connected neighborhood $\Omega$ of $z_0 \in \dR$.
If $z_0$ is the GZNT of $Q(z)$,
then  precisely one of the following cases occurs:
\begin{itemize}
\item[(1)] $\widetilde Q'(z_0) < 0$;
\item[(2)] $\widetilde Q'(z_0) = 0$ and $\tilde Q''(z_0) \neq 0$,
in which case $\IM \widetilde Q''(z_0)\geq0$;
\item[(3)] $\widetilde Q'(z_0) = 0$ and $\widetilde Q''(z_0) = 0$,
in which case $\widetilde Q'''(z_0) >0$.
\end{itemize}
\end{proposition}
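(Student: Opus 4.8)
The plan is to exploit the factorization forced by the GZNT structure together with Theorem \ref{QExt}, and then to read off the three cases from the low-order Taylor coefficients of $\widetilde Q$ at $z_0$. Since $z_0\in\dR$ is the GZNT, in \eqref{nev}--\eqref{nev1} one has $\al=z_0$, so $Q(z)=(z-z_0)^2R_0(z)M(z)$, where $R_0(z)=1/((z-\be)(z-\bar\be))$ or $R_0\equiv1$ and $M$ is an ordinary Nevanlinna function. Shrinking $\Omega$ so that $\be,\bar\be\notin\Omega$ (possible because $\be\neq z_0$), the factor $R_0$ is holomorphic and nonvanishing on $\Omega$, and in either case $R_0(z_0)>0$. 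Applying Theorem \ref{QExt} to $\widetilde Q$ yields $M(z)=M_1(z)+m_0/(z_0-z)$ with $m_0\ge0$, where $M_1$ extends holomorphically to $\widetilde M_1$ on $\dC^+\cup\Omega$, with density $\phi$ real holomorphic on $\Omega$ and $\IM\widetilde M_1(t)=\pi\phi(t)\ge0$ for $t\in\Omega\cap\dR$.

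Next I would compute the first three derivatives of $\widetilde Q$ at $z_0$. Writing $w=z-z_0$ one has $\widetilde Q=w^2R_0\widetilde M_1-m_0wR_0$, so a direct differentiation gives
\[
\widetilde Q'(z_0)=-m_0R_0(z_0),
\]
and, in the case $m_0=0$ (i.e. $M=M_1$), with $g:=R_0\widetilde M_1$,
\[
\widetilde Q''(z_0)=2g(z_0)=2R_0(z_0)\widetilde M_1(z_0),\qquad \widetilde Q'''(z_0)=6g'(z_0).
\]
Since $R_0(z_0)>0$ and $m_0\ge0$, the first identity shows $\widetilde Q'(z_0)\le0$, with $\widetilde Q'(z_0)<0$ exactly when $m_0>0$; this is case (1). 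When $\widetilde Q'(z_0)=0$ necessarily $m_0=0$, and the remaining trichotomy is simply whether $\widetilde Q''(z_0)\neq0$ (case (2)) or $\widetilde Q''(z_0)=0$ (case (3)). Thus the three cases are mutually exclusive and exhaustive.

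It then remains to verify the sign conditions in (2) and (3). In case (2) one computes $\IM\widetilde Q''(z_0)=2R_0(z_0)\IM\widetilde M_1(z_0)=2\pi R_0(z_0)\phi(z_0)\ge0$, because $R_0(z_0)>0$ and $\phi\ge0$ on $\Omega\cap\dR$, which is the assertion. In case (3) one has $m_0=0$ and $\widetilde M_1(z_0)=0$; then $g'(z_0)=R_0(z_0)\widetilde M_1'(z_0)$, so the claim $\widetilde Q'''(z_0)>0$ reduces to showing $\widetilde M_1'(z_0)>0$.

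The hard part is precisely this strict positivity. First, $\widetilde M_1(z_0)=0$ forces $\phi(z_0)=\frac{1}{\pi}\IM\widetilde M_1(z_0)=0$; since $\phi\ge0$ near $z_0$, the point $z_0$ is an interior minimum and $\phi'(z_0)=0$. Differentiating the identity $2\pi\ii\,\phi(z)=\widetilde M_1(z)-\overline{\widetilde M_1(\bar z)}$ from the proof of Theorem \ref{hExt} gives $\IM\widetilde M_1'(z_0)=\pi\phi'(z_0)=0$, so $\widetilde M_1'(z_0)$ is real. Evaluating $\IM\widetilde M_1$ along $z=z_0+\ii\eps$ and using $\IM\widetilde M_1\ge0$ on $\dC^+$ gives, to leading order, $\eps\,\RE\widetilde M_1'(z_0)\ge0$, whence $\widetilde M_1'(z_0)=\RE\widetilde M_1'(z_0)\ge0$. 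To exclude equality I would argue that $\widetilde M_1\not\equiv0$ (otherwise $Q\equiv0\notin\mathbf{N}_1$), so $\widetilde M_1$ has a zero of some finite order $k\ge1$ at the real point $z_0$; if $k\ge2$ then $\widetilde M_1(z)\sim c(z-z_0)^k$ with $c\neq0$, and the image of a small upper half-disc about $z_0$ under $\widetilde M_1$ sweeps out an argument interval of length $k\pi\ge2\pi$, so $\IM\widetilde M_1$ becomes strictly negative somewhere in $\dC^+\cap\Omega$, contradicting the Nevanlinna property. Hence $k=1$ and $\widetilde M_1'(z_0)>0$, giving $\widetilde Q'''(z_0)=6R_0(z_0)\widetilde M_1'(z_0)>0$ and completing case (3).
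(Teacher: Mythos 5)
Your proof is correct, and up to the key step it follows the paper's route: the same factorization $\widetilde Q(z)=(z-z_0)^2R_0(z)\bigl(m_0/(z_0-z)+\widetilde M_1(z)\bigr)$ from Theorem \ref{QExt}, the same identification $\widetilde Q'(z_0)=-m_0R_0(z_0)$, $\widetilde Q''(z_0)=2R_0(z_0)\widetilde M_1(z_0)$, and the same disposal of cases (1) and (2). (Your constants are in fact the correct ones: the paper writes $\widetilde Q'(z_0)=-m_0$ and $\widetilde Q'''(z_0)=2R_0(z_0)\widetilde M_1'(z_0)$, dropping the harmless positive factors $R_0(z_0)$ and $3$.) Where you genuinely diverge is the strict inequality $\widetilde M_1'(z_0)>0$ in case (3). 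The paper decomposes $\widetilde M_1$ as the Stieltjes transform of $\phi$ over $[z_0-\eps,z_0+\eps]$ plus a Nevanlinna function $M_2$ with a spectral gap there, uses $\phi(z_0)=\phi'(z_0)=0$ to make $\phi(t)/(t-z_0)^2$ integrable, and obtains via dominated convergence the formula \eqref{mic}, $\widetilde M_1'(z_0)=\int\phi(t)(t-z_0)^{-2}\,dt+M_2'(z_0)\ge0$, with at least one term positive lest $Q\equiv0$. You instead argue function-theoretically: $\widetilde M_1'(z_0)$ is real (since $\phi'(z_0)=0$) and nonnegative (by testing along $z_0+\ii\eps$), and a real zero of order $k\ge2$ would make $\arg\widetilde M_1$ sweep an interval of length $k\pi\ge2\pi$ over a small upper half-disc, forcing $\IM\widetilde M_1<0$ somewhere in $\dC^+$ and contradicting the Nevanlinna property; hence $k=1$ and $\widetilde M_1'(z_0)>0$. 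Your argument is more elementary and self-contained, avoiding the measure decomposition and dominated convergence altogether; the paper's argument stays closer to the integral representation and has the side benefit of the explicit formula \eqref{mic}, which identifies exactly which part of the measure produces the positivity. Both are complete proofs.
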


\begin{proof}
According to  Theorem \ref{QExt}(ii) the extension $\widetilde Q(z)$ can be represented as follows:  
\begin{equation}\label{qwi}
\widetilde Q(z)=(z-z_0)^2\, R_{0}(z)\, \left(\frac{m_0}{z_0-z} + \widetilde M_1(z)\right),
\end{equation}
with $m_{0} \ge 0$ and
\[
R_{0}(z)=1/(z-\beta)(z-\bar\beta) \quad \mbox{or} \quad   R_{0}(z)\equiv 1,
\]
depending on the position of the GPNT.  The function  $\widetilde M_1(z)$
in \eqref{qwi}
is a Nevanlinna function in $\dC^{+}$ which is also
holomorphic in a neighborhood $\Omega$  of $z_{0}$ of the form
\[
\Omega=[z_{0}-\eps,z_{0}+\eps]+\ii[-\eps,\eps],
\]
where $\eps > 0$ is sufficiently small.

In order to list the possible cases, observe that it follows from \eqref{qwi} that
\[
\widetilde Q'(z_0)\neq 0 \quad \Leftrightarrow \quad m_0\neq 0,
\]
and in this situation $\widetilde Q'(z_0)=-m_0<0$.  This takes care of (1).

It remains to consider the situation $\widetilde Q'(z_0)=0$, in which case
it follows from \eqref{qwi} that
\begin{equation}\label{Q''}
\widetilde Q''(z_0)=2R_{0}(z_0)  {\widetilde M_1(z_0)}.
\end{equation}
Since $\widetilde M_1(z)$ is a Nevanlinna function in $\dC^{+}$
and it is continuous at $z_0$ it follows that $\IM \widetilde M_1(z_0) \ge 0$.
Furthermore, note that $R_{0}(z_0)>0$.  Therefore one has that
\begin{equation}\label{qwi+}
\widetilde Q'(z_0)=0 \quad \Rightarrow \quad \IM \widetilde Q''(z_0) \ge 0,
\end{equation}
which takes care of (2).

Finally, consider the case $\widetilde Q'(z_0)=\widetilde Q''(z_0)=0$.
Then by  \eqref{Q''} and the fact that $R_{0}(z_0)>0$
one has $\widetilde  M_1(z_0)=0$. Consequently,
\begin{equation}\label{qwi++}
\widetilde Q'''(z_0)=2R_{0}(z_0) \widetilde M_1'(z_0).
\end{equation}
Recall that by Theorem \ref{hExt} the function $\widetilde M_1(z)$
can be represented as
$$
\widetilde M_1(z)= \int_{z_0-\eps}^{z_0+\eps} \frac{\phi(t) dt}{t-z} + M_2(z),
$$
where $\phi(z)$ is a function holomorphic in $\Omega$,
and $M_2(z)$ is a Nevanlinna function with a gap $[z_0-\eps,z_0+\eps]$.
By the general theory of Nevanlinna functions \cite{donoghue} one has 
$$
\phi(z_0)=\frac 1\pi\ \IM \widetilde M_1(z_0)=0.
$$
Furthermore, $\phi'(z_0)=0$, since $\phi(t)$ is positive on $[z_0-\eps,z_0+\eps]$.
Hence, the function $\phi(t)/(z_0-t)^2$ is integrable on $[z_0-\eps,z_0+\eps]$. By the dominated convergence theorem it follows that
\begin{equation}\label{mic}
\widetilde M_1'(z_0)=\int_{z_0-\eps}^{z_0+\eps} \frac{\phi(t) dt}{(t-z_0)^2} + M_2'(z_0).
\end{equation}
It is clear that $M_2'(z_0) \ge 0$, since
$M_2(z)$ has a gap at $[z_0-\eps,z_0+\eps]$.
Hence \eqref{mic} shows that
$\widetilde M_1'(z_0) \ge 0$. In fact, at least one of
the terms in the right-hand side
of \eqref{mic} has
to be positive, otherwise $Q(z)\equiv 0$,
which is not an $\mathbf{N}_1$ function.
Hence it follows that $M_2'(z_0) > 0$ and,
by \eqref{qwi++}, $\widetilde Q'''(z_0)>0$.
This takes care of (3).
\end{proof}

In \cite[Theorem 4.1] {SWW} it is investigated what cases can occur if the curve $\{\al(\tau):\tau\in\dR\cup\set\infty\}$
meets the real line in a spectral gap of the function $M(z)$: Either it approaches the spectral gap perpendicular and then
continues through some subinterval of the spectral gap, or it approaches it with an angle of $\pi/3$,
hits the spectral gap in a single point and leaves it with an angle of $2\pi/3$. But it is also possible that the curve
$\{\al(\tau):\tau\in\dR\cup\set\infty\}$ meets the real line in a single point of $\supp\sigma$ and leaves it again, as
the examples in  Section 5 show. The following theorem provides a characterization of the possible cases in the
situation that $M(z)$ has a holomorphic extension through the corresponding point of intersection as in Proposition \ref{zeros>0}. 
The proof is similar to the proof of \cite[Theorem 4.1] {SWW} and again based on the generalized inverse function
theorem, see e.g. \cite[Theorem 9.4.3]{H}.

\begin{theorem} \label{mainth}
Let $Q(z) \in \mathbf{N}_1$ and assume that $Q(z)$
extends to a  holomorphic function $\widetilde Q(z)$ in some simply
connected neighborhood $\Omega$ of $z_0 \in \dR$. Furthermore
assume  that $\alpha(\tau_0)=z_{0}$ for $\tau_0\in\dR$.
Then precisely one of the following cases occur:
\begin{itemize}

\item[(1)]
$Q'(z_0)<0$.
Then there exists $\varepsilon > 0$ such that the function
$\alpha(\tau)$ is  holomorphic on $(-\varepsilon, \varepsilon)$ and
\[
 \lim_{\tau \uparrow 0} \arg (\alpha(\tau)-z_0) =0,
 \quad  \lim_{\tau \downarrow 0} \arg (\alpha(\tau)-z_0)  = \pi.
\]

\item[(2)]
$Q'(z_0)=0$ and $Q''(z_0) \neq 0$.
Then there exist $\varepsilon_1 > 0$ and $\varepsilon_2 > 0$ such that the function $\alpha(\tau)$ is holomorphic on each of the intervals $(-\varepsilon_1,0)$ and
$(0,\varepsilon_2)$. Moreover,
\begin{equation}\label{limits}
 \lim_{\tau \downarrow 0} \arg (\alpha(\tau)-z_0)  =\frac{2\pi - \theta_0}2,\quad   \lim_{\tau \uparrow 0} \arg (\alpha(\tau)-z_0) =  \frac{\pi - \theta_0}2   ,
\end{equation}
where $\theta_0=\arg Q''(z_0)$.

\item[(3)]
$Q'(z_0)=Q''(z_0)=0$ and $Q'''(z_0)\neq 0$.
Then there exist $\varepsilon_1 > 0$ and $\varepsilon_2 > 0$ such that
the function $\alpha(\tau)$ is holomorphic on each of the
intervals $(-\varepsilon_1,0)$ and $(0,\varepsilon_2)$. Moreover,
$$
\lim_{\tau \uparrow 0}\arg (\alpha(\tau)-z_0)=\frac\pi3,\quad \lim_{\tau \downarrow 0}\arg (\alpha(\tau)-z_0)= \frac{2\pi}3.
$$
\end{itemize}
\end{theorem}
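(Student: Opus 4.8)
The plan is to reduce the statement to a local inversion of the holomorphic extension $\widetilde Q$ at $z_0$ and then to single out, among the finitely many local inverse branches, the one that produces the generalized zero of nonpositive type. Throughout we take $\tau_0=0$ (as the one-sided limits indicate), so that $Q_0=Q$, the point $z_0$ is the GZNT of $Q$, and the three cases are exactly those of Proposition \ref{zeros>0}; the latter also supplies the sign information $\IM Q''(z_0)\ge0$, i.e.\ $\theta_0\in[0,\pi]$, in case (2) and $Q'''(z_0)>0$ in case (3). Locally near $z_0$, where $1+\tau Q\neq0$, the zeros of $Q_\tau$ are precisely the solutions of $\widetilde Q(z)=\tau$, so $\alpha(\tau)$ satisfies $\widetilde Q(\alpha(\tau))=\tau$; by the continuity established in Theorem \ref{kurve}, $\alpha(\tau)\to z_0$ as $\tau\to0$, which confines $\alpha(\tau)$ to the solutions near $z_0$. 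Two facts drive the selection. First, by the factorization \eqref{nev}--\eqref{nev1} the ordinary Nevanlinna factor of $Q_\tau$ has no zeros in $\dC^+$, so whenever $\alpha(\tau)\notin\dR$ the GZNT is the \emph{unique} zero of $Q_\tau$ in $\dC^+$. Second, differentiating the linear fractional transformation gives, at any zero $z$ of $Q_\tau$ (where $Q(z)=\tau$),
\[
Q_\tau'(z)=\frac{Q'(z)}{1+\tau^2},
\]
so a real simple zero is of nonpositive type exactly when $\widetilde Q'(z)\le0$, by case (1) of Proposition \ref{zeros>0}; this is the tool needed when the inverse returns real solutions.

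Next I would apply the generalized inverse function theorem \cite[Theorem 9.4.3]{H} to $\widetilde Q$ at $z_0$. In case (1) the zero of $\widetilde Q$ at $z_0$ is simple, so the ordinary inverse function theorem applies: $\alpha(\tau)=\widetilde Q^{-1}(\tau)$ is holomorphic on a full interval $(-\varepsilon,\varepsilon)$, and $\alpha(\tau)-z_0=\tau/Q'(z_0)+o(\tau)$ with $Q'(z_0)<0$ real, whence $\arg(\alpha(\tau)-z_0)\to0$ as $\tau\uparrow0$ and $\to\pi$ as $\tau\downarrow0$. In cases (2) and (3) the zero of $\widetilde Q$ at $z_0$ has order $k=2$, resp.\ $k=3$; the theorem then furnishes a biholomorphism $g$ with $g(z_0)=0$ and $\widetilde Q=g^{\,k}$, so that the $k$ local inverse branches are $z=z_0+g^{-1}(\omega\,\tau^{1/k})$ as $\omega$ runs over the $k$-th roots of unity. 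On each of the half-neighborhoods $\tau<0$ and $\tau>0$ every such branch is real-analytic and extends holomorphically, which gives the asserted holomorphicity of $\alpha$ on $(-\varepsilon_1,0)$ and $(0,\varepsilon_2)$ separately, while $\tau=0$ is a branch point obstructing continuation across it.

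It remains to pin down the branch and read off the arguments. Writing $\alpha(\tau)-z_0=\bigl(k!\,\tau/Q^{(k)}(z_0)\bigr)^{1/k}(1+o(1))$ and noting that the correction factor has argument tending to $0$, one obtains
\[
\arg(\alpha(\tau)-z_0)\longrightarrow\tfrac1k\bigl(\arg\tau-\arg Q^{(k)}(z_0)\bigr)\pmod{2\pi/k}.
\]
In case (3), where $\arg Q'''(z_0)=0$, exactly one of the three cube roots lies in $\dC^+$, at argument $\pi/3$ for $\tau<0$ and $2\pi/3$ for $\tau>0$; being the unique zero of $Q_\tau$ in $\dC^+$ it is the GZNT, which yields both limits (the remaining real root is of positive type by the criterion above). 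In case (2) with $\IM Q''(z_0)>0$ the two roots always split, one in $\dC^+$ and one in the open lower half-plane, and the upper one, again the GZNT, has argument $(2\pi-\theta_0)/2$ for $\tau\downarrow0$ and $(\pi-\theta_0)/2$ for $\tau\uparrow0$. In the degenerate subcases $\theta_0\in\{0,\pi\}$ one side instead yields a pair of real solutions; there the criterion $\widetilde Q'(z)\le0$ picks the correct real root, whose argument is the boundary value ($0$ or $\pi$) already furnished by the formula, so the limits in \eqref{limits} persist by continuity in $\theta_0$.

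The main obstacle is precisely this branch selection: confirming that the branch returned by the local inverse is genuinely the generalized zero of nonpositive type. The continuity from Theorem \ref{kurve} localizes $\alpha(\tau)$ near $z_0$, and uniqueness of the zero in $\dC^+$ settles the generic situation immediately; but discarding the spurious real root in case (3) and treating the degenerate real-root subcases of case (2) both rest on the identity $Q_\tau'=Q'/(1+\tau^2)$ together with Proposition \ref{zeros>0}. Everything else reduces to bookkeeping with $k$-th roots.
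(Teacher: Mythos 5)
Your proposal is correct and follows essentially the same route as the paper: reduce to $\tau_0=0$ via Proposition \ref{zeros>0}, apply the ordinary inverse function theorem in case (1) and the generalized inverse function theorem of \cite[Theorem 9.4.3]{H} in cases (2) and (3), select the branch lying in $\dC^{+}\cup\dR$, and read off the limiting arguments from the leading coefficient of the inverse branch. Your explicit justification of the branch selection (uniqueness of the zero of $Q_\tau$ in $\dC^+$ from the factorization, and the identity $Q_\tau'(z)=Q'(z)/(1+\tau^2)$ at a real zero to handle the degenerate subcases $\theta_0\in\{0,\pi\}$) is a welcome elaboration of a step the paper leaves largely implicit, but it does not change the underlying argument.
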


\begin{proof}
Note that it is enough to consider the case $\tau=0$. Indeed,
if $Q(z)$ extends to some simply connected neigborhood
$\Omega$ of $z_0$ then  $Q_{\tau_0}(z)$ extends
to $\Omega\setminus\{\beta(\tau_0),\overline{\beta(\tau_0)}\}$.
Since $z_0=\al(\tau_0)\notin\{\be(\tau_0),\overline{\be(\tau_0)}\}$
one can choose a sufficiently small $\Omega$ for the function
$Q_{\tau_0}(z)$.  In this situation the cases (1) -- (3)
correspond precisely to the classification in
Proposition \ref{zeros>0}. Furthermore, without loosing generality,
it is assumed that $z_0=0$.

Case (1). According to the standard inverse function theorem,
there exists a function $\phi(w)$ satisfying $Q(\phi(w))=w$,
cf. \cite{SWW}.  Then define $\al(\tau)=\phi(\tau)$ for $\tau$ sufficiently small.
The power series of $\widetilde Q(z)$ at zero does need to have all its coefficients real,
as was the case in \cite{SWW}.

Case (2). $\widetilde Q(0)=\widetilde Q'(0)=0$, and $\IM\widetilde Q''(0) >0$.
According to the generalized inverse function theorem, see e.g. \cite[Theorem 9.4.3]{H}, the equation
\[
\widetilde Q(\phi^\pm(w))=w^2,
\]
has in some neighborhood of zero exactly two holomorphic solutions
$\phi^+(w)$ and $\phi^-(w)$.  The corresponding expansions
\[
\phi^\pm(w)=\phi_1^\pm w+\phi_2^\pm w^2+ \cdots ,
\]
satisfy $\phi_1^\pm=\pm (\widetilde Q''(0)/2)^{-1/2}$,
where the square root is chosen in such way that it transforms $\dC^{+}$
onto itself. Recall that  $\IM \widetilde Q''(0)\geq 0$ and, hence,
it follows that $\pm\IM\phi_1^\pm\geq 0$.

In the case  $\tau>0$ one has the identity
\begin{equation}\label{pmtau}
\widetilde Q(\phi^-(\tau^{1/2}))=\tau
\end{equation}
and $\arg\phi^-  =  \pi - \theta_0/2$.  Hence  $\phi^-(\tau^{1/2})$ is in
$\dC^{+} \cup \dR$ 
for small $\tau>0$. As a consequence one sees that
 $$
\alpha(\tau)= \phi^-(\tau^{1/2}),\quad 0<\tau<+\infty.
$$
The expansion of $\phi^-(\tau^{1/2})$ implies the   limit of $\arg (\alpha(\tau))$ as $\tau\downarrow0$:
\[
\begin{split}
\lim_{\tau\downarrow0}\tan(\arg(\alpha(\tau)))
&=\lim_{\tau\downarrow0}\frac{\IM\alpha(\tau)}{\RE\alpha(\tau)} \\
&= \frac{\IM\phi_1^{-}}{\RE\phi_1^{-}}
=\tan\arg\phi_1^- \\
& =\tan ( (2\pi - \theta_0)/2).
\end{split}
\]
Since the tangent function is injective on the interval $[0,\pi]$, the first part of \eqref{limits} follows.

Similarly, in the case $\tau<0$ one has the identity
\begin{equation}\label{pmtau2}
\widetilde Q(\phi^+(\ii|\tau^{1/2}|))=-|\tau|=\tau,
\end{equation}
and $\arg(\phi_1^+\ii)=(\pi-\theta_0)/2$. Hence,  $\phi^+((\ii|\tau^{1/2}|))$ is in
$\dC^{+}$
for small $\tau<0$. As a consequence one sees that
 $$
\alpha(\tau)= \phi^+(\tau^{1/2})\quad  -\infty<<\tau<0.
$$
 The expansion of $\phi^+(i|\tau|^{1/2})$ implies the  left limit of $\arg (\alpha(\tau))$ at zero:
\[
\lim_{\tau\uparrow0}\tan(\arg(\alpha(\tau)))=\lim_{\tau\uparrow0}\frac{\IM\alpha(\tau)}{\RE\alpha(\tau)}=
\tan(\arg(\ii\phi_1^+))
   =\tan ( (\pi - \theta_0)/2).
\]

Case (3) follows exactly along the same lines as in \cite{SWW}.
\end{proof}


\section{Classification of GZNT}\label{5}

Let $Q(z)$ belong to $\mathbf{N}_{1}$ and assume for simplicity
that its GPNT $\beta =\infty$. 
Then the integral representation of $Q(z)$ has the following form:
\begin{equation}\label{Qopform}
 Q(z)=(z-\al)(z-\bar \al)
 \left( a+bz+\int_{\dR}\left(\frac 1{t-z}-\frac{t}{t^2+1}\right)d\sigma(t)\right),
\end{equation}
with $\alpha \in \dC$, $a\in\dR$, $b\geq0$, and a measure $\sigma$ satisfying
$\int_{\dR} d\sigma(t)/(t^{2}+1)<\infty$.
If the GZNT $\alpha$ belongs to $\dR$, then
there is the following classification of $\alpha$ in terms of the integral representation
\eqref{Qopform}:
\begin{itemize}
\item[(A)] $\delta_\al :=\int _{\set{\al}} 1 d\sigma >0$;
\item[(B)] $\delta_\al=0$, $\int_{\dR} \frac{d\sigma(t)}{(t-\al)^2}=\infty$;
\item[(C)] $\delta_\al=0$, $\gamma_\al:=\lim_{z\widehat\to \al} \frac{Q(z)}{(z-\al)^2}\in\dR\setminus\{0\}$, $\int_{\dR} \frac{d\sigma(t)}{(t-\al)^2}<\infty$;
\item[(D)] $\delta_\al=\gamma_\al=0$, $\int_{\dR} \frac{d\sigma(t)}{(t-\al)^2}<\infty$, $\int_{\dR} \frac{d\sigma(t)}{(t-\al)^4}=\infty$;
\item[(E)] $\delta_\al=\gamma_\al=0$, $\int_{\dR} \frac{d\sigma(t)}{(t-\al)^4}<\infty$,
\end{itemize}
cf. \cite{DHS3}.
This classification has an interpretation in terms of a corresponding operator
model.
Let $H$ be selfadjoint operator in a Pontryagin space with inner product
$[\cdot,\cdot]$ 
with one negative square and let $\omega$ be a cyclic vector.
Associated with $H$ and $\omega$ is the function $Q(z)$ defined by\footnote{Note that $Q$ in the present paper plays the role of $Q_\infty$ in \cite{DHS3}. }
\begin{equation}\label{QHrel}
-1/Q(z) = [(H-z)^{-1}\omega,\omega],\quad z \in\rho(H);
\end{equation}
it can be shown that this function belongs to $\mathbf{ N}_1$.
Since $H$ is an operator, the function $Q(z)$ has a GPNT at infinity and therefore
it has the integral representation \eqref{Qopform}.
The GZNT $\alpha$ of $Q(z)$ is a real eigenvalue of nonpositive type of $H$.
The above classification of the real GZNT $\alpha$ corresponds to the
 equivalent classification (A)--(E) of the real eigenvalue $\alpha$ given in the Introduction.
cf. \cite[Theorem 5.1]{DHS3}.

The classification of the cases (1)--(3) in Theorem \ref{mainth} above will be compared with the above classification of the cases (A)--(E). In fact this comparison already
exists in  the  case of a spectral gap, i.e. in the case when the measure $\sigma$ is zero
around $z_0$ with a possible mass point at $z_0$; cf.  \cite{SWW}.
Here is the comparison in the general case; the possible pairings are listed
and examples of these pairings are indicated:

\begin{itemize}
\item[(A)] and (1):  an example with a nonsymmetric extension is given by
\[
Q(z)=z^2\left(\ii-\frac1z\right),
\]
see Example \ref{i}, while an example in the  case of a spectral gap is given by $Q(z)=-z$.
\item[(B)] and (2): an example with a nonsymmetric extension is given by
\[
Q(z)=z^2 \ \e^{\ii\theta_0}, \quad \theta_0\in[0,\pi],
\]
see Example \ref{ii}, while examples in the  case of a spectral gap do not exist.
\item[(C)] and (2):  an example with a nonsymmetric extension is given by
\[
Q(z)=z^2\left(  1+ \int_{-1}^1 \frac{t^2\  dt}{t-z}  \right),
\]
see Example \ref{iii}, while an example in the  case of a spectral gap is given by $Q(z)=z^{2}$.
\item[(D)] and (3): an example with a nonsymmetric extension is given by
\[
Q(z)=z^2\left(  \int_{-1}^1 \frac{t^2\  dt}{t-z}  \right),
\]
see Example \ref{iv}, while examples in the  case of a spectral gap do not exist.
\item[(E)] and (3): an example with a nonsymmetric extension is given by
\[
Q(z)=z^2\left(   \int_{-1}^1 \frac{t^4\  dt}{t-z}  \right),
\]
see Example \ref{v}, while an example in the  case of a spectral gap is given by $Q(z)=z^{3}$.
\end{itemize}
The rest of this section is devoted to the treatment of these and other examples.

%

%

\begin{example}\label{i}
 To illustrate Case (A) consider the $\mathbf{N}_1$ function
\[
 Q(z)={z^2}\, \left(\ii-\frac{1}{z} \right).
\]
The plot in Figure \ref{figi} shows all the points $z$ from the upper half--plane where $\IM Q(z)=0$. From Theorem \ref{mainth} Case (i) one can determine, that $\al(\tau)$ moves along the plotted curve from the right to the left hand side, i.e $\IM \Re (\al(\tau))$ is decreasing in $\tau$. Note that although $\al(\tau)$ approaches the origin horizontally, $\al(\tau)\notin\dR$ for $\tau\neq0$, in contrast to the the  case of a spectral gap described in \cite{SWW}. This behavior agrees with  \cite[Theorem 3.6]{SWW}.
\begin{figure}[htb]
\begin{center}
\includegraphics[width=200pt]{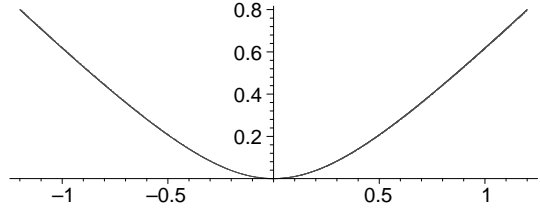}
\end{center}
\caption{Case (A), $Q(z)=z^2(\ii-1/z)$}\label{figi}
\end{figure}
An example, which is simpler to compute,
however not in the form \eqref{Qopform}, is
\[
 Q(z)=\frac{z^2}{(z-\ii)(z+\ii)}\, \left(\ii-\frac{1}{z} \right)=\frac{\ii z}{z-\ii}.
\]
Solving
$$
\frac{\ii z}{z-\ii}=\tau
$$
one gets
\[
 \alpha(\tau)=\frac{-\tau +\tau^2\ii}{\tau^2+1},
\]
and the same effect of approaching the origin tangentially from both sides is obtained.
\end{example}

\begin{example}\label{ii}
To illustrate Case (B) consider for $\theta_0\in[0,\pi]$ the function
 $$
Q(z)=z^2 \e^{\ii\theta_0}.
$$
Solving $Q(z)=\tau$ with $z\in\dC^+$ one obtains
$$
\al(\tau)= \begin{cases}
                  \sqrt{|\tau|} \cdot \e^{\ii (\pi - \theta_0)/2}, & \tau\leq 0, \\
                  \sqrt{\tau} \cdot \e^{\ii (2\pi - \theta_0)/2}, & \tau>0.
                  \end{cases}
                  $$
As another example of Case (B) consider
$$
Q(z)=z^2 \int_{-1}^{1} \frac{dt}{t-z}.
$$
Figure \ref{figii} contains the plot of points $z\in\dC^+$  satisfying $\IM Q(z)=0$.
\begin{figure}[htb]
\begin{center}
\includegraphics[width=200pt]{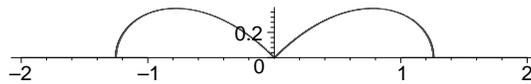}
\end{center}
\caption{Case (B), $Q(z)=z^2 \int_{-1}^{1} \frac{dt}{t-z}$.}\label{figii}
\end{figure}
Since $\alpha(\infty)=\infty$ one sees that for sufficiently small $\tau<0$  the point $\alpha(\tau)$ moves with increasing $\tau$ along the real line to the left until it reaches the point near $1.7$. There  it leaves the real line to the upper half--plane and continues along the plotted path until it reaches the real line again at approximately $-1.7$.
From that point it continues along the real line.
\end{example}

\begin{example}\label{iii}
To illustrate Case (C) consider the function
$$
Q(z)=z^2\left(  1+ \int_{-1}^1 \frac{t^2\  dt}{t-z}  \right).
$$
Figure \ref{figiii} contains the plot of points $z\in\dC^+$  satisfying $\IM Q(z)=0$.
One may observe that with $\tau\downarrow 0$ the point $\alpha(\tau)$ approches the real line approximately vertically and it leaves the origin approximately horizontally.
It is known from Theorem 3.6 of \cite{SWW} that the only point in a neighborhood of zero where $\al(\tau)\in\dR$ is the origin itself.
 \begin{figure}[htb]
\begin{center}
\includegraphics[width=200pt]{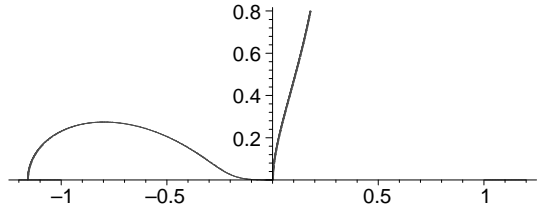}
\end{center}
\caption{Case (C), $Q(z)=z^2\left(  1+ \int_{-1}^1 \frac{t^2\  dt}{t-z}  \right)$}\label{figiii}
\end{figure}
\end{example}

\begin{example}\label{iv}
To illustrate Case (D) consider the function
 $$
 Q(z)=z^2\int_{-1}^1 \frac{t^2}{t-z}dt.
 $$
 Figure \ref{figiv} contains the plot of points $z\in\dC^+$  satisfying $\IM Q(z)=0$.
Note the essential difference between Figure \ref{figii} and Figure \ref{figiv}, in Figure \ref{figii} the angle between the left and right limit of $\al(\tau)$ at the origin is $\pi/2$, while in Figure \ref{figiv} it is $\pi/3$.
The movement of $\al(\tau)$ along the plotted line and the real line is the same as in Example \ref{ii}.
\begin{figure}[htb]
\begin{center}
\includegraphics[width=200pt]{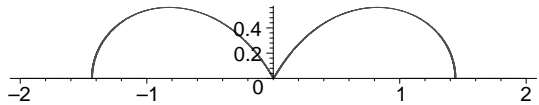}
\end{center}
\caption{Case (D), $Q(z)=z^2\left(  \int_{-1}^1 \frac{t^2\  dt}{t-z}  \right) $}\label{figiv}
\end{figure}
\end{example}
\newpage

\begin{example}\label{v}Finally, to illustrate Case (E) consider the function
$$
Q(z)=z^2\left(  \int_{-1}^1 \frac{t^4\  dt}{t-z}  \right).
$$
Figure \ref{figv} contains the plot of points $z\in\dC^+$  satisfying $\IM Q(z)=0$.
The angle between the left and right limit of $\al(\tau)$ at the origin is $\pi/3$ and the movement of $\al(\tau)$ along the plotted line and the real line is the same as in Examples \ref{ii} and \ref{iii}.

 \begin{figure}[htb]
\begin{center}
\includegraphics[width=200pt]{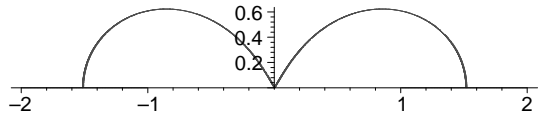}
\end{center}
\caption{Case (E), $Q(z)=z^2\left(  \int_{-1}^1 \frac{t^4\  dt}{t-z}  \right)$}\label{figv}
\end{figure}

\end{example}

\end{document}